\newtheorem{thm}{Theorem}[section]
\newtheorem{cor}[thm]{Corollary}
\newtheorem{lemma}[thm]{Lemma}
\newtheorem{preremark}[thm]{Remark}
\newenvironment{remark}{\begin{preremark}\rm}{\medskip \end{preremark}}
\numberwithin{equation}{section}
\newcommand{\norm}[1]{\left\Vert#1\right\Vert}
\newcommand{\abs}[1]{\left\vert#1\right\vert}
\newcommand{\set}[1]{\left\{#1\right\}}
\newcommand{\R}{\mathbb R}
\newcommand{\F}{\mathbb F}
\DeclareMathOperator{\Vol}{Vol}
\newcommand{\eps}{\varepsilon}
\newcommand{\grad} {\nabla}
\newcommand{\dd} {\mathrm{d}}
\DeclareMathOperator{\Def}{Def}
\DeclareMathOperator{\Ric}{Ric}
\def\H{\mathbb H^{2}(-a^{2})}
\def\be{\begin{equation}}
\def\ee{\end{equation}}
\def\qand{\quad\mbox{and}\quad}
\begin{document}
\title[Liouville theorems on a hyperbolic space]{Liouville theorems for the Stationary Navier Stokes equation on a hyperbolic space.}

\author[Chan]{Chi Hin Chan}
\address{Department of Applied Mathematics, National Chiao Tung University,1001 Ta Hsueh Road, Hsinchu, Taiwan 30010, ROC}
\email{cchan@math.nctu.edu.tw}
\author[Czubak]{Magdalena Czubak}
\address{Department of Mathematical Sciences, Binghamton University (SUNY),
Binghamton, NY 13902-6000, USA}
\email{czubak@math.binghamton.edu}

\begin{abstract}
The problem for the stationary Navier-Stokes equation in 3D under finite Dirichlet norm is open.  In this paper we answer the analogous question on the 3D hyperbolic space.   We also address other dimensions and more general manifolds.
\end{abstract}
\date{\today}
\subjclass[2010]{76D05, 76D03;}
\keywords{Steady State, Stationary Navier-Stokes, Liouville theorems, hyperbolic space}
\maketitle


 \section{Introduction}
 Consider the following stationary Navier-Stokes equation on $\R^n$, $n\geq 2$
 \begin{align}
 -\Delta u +u\cdot \nabla u + \grad p&=0,\\
 \nabla \cdot u&=0,
 \end{align}
 together with the conditions that
 \begin{align}
  \lim_{\abs{x}\rightarrow \infty} u(x)=0\qand \int_{\R^n}\abs{\nabla v}^2 <\infty,
 \end{align}
 where
 \[
 u: \R^n \rightarrow \R^n\qand p: \R^n \rightarrow \R.
 \]
 The zero solution is a solution, but is it the only solution?  In all dimensions $n\neq 3$, the answer is known, and it is yes, zero solution is the only solution (see for example \cite{Galdi}).   In three dimensions, Galdi \cite{Galdi} has shown that if one imposes in addition that $u \in L^{\frac 92}(\R^3)$ that the answer is also yes.  However, the full problem in three dimensions remains open.

In this paper, under the assumptions of finite $\dot H^1$ norm only, without any additional assumptions on the integrability, we give a positive answer on a hyperbolic space in three dimensions (as well as four dimensions).  The main result is
\begin{thm}\label{MAINTHM}
Let $u$ be a divergence free, smooth $1$-form on $\mathbb{H}^N(-a^2)$, and $p \in C^{\infty}(\mathbb{H}^N(-a^2))$, with $N\geq 2$. If $(u, p)$ satisfy the following stationary Navier-Stokes equation on $\mathbb{H}^ N(-a^2)$
\begin{equation}\label{NSequation}
\begin{split}
2 \Def^* \Def u + \nabla_u u + \dd p & = 0 ,\\
\dd^* u & = 0,
\end{split}
\end{equation}
and if
 \begin{equation}\label{FiniteDirichlet}
\int_{\mathbb{H}^N(-a^2)} \big | \nabla u \big |^2\Vol_{\mathbb{H}^N(-a^2)} < \infty ,
\end{equation}
then $u = 0$ on $\mathbb{H}^N(-a^2)$ if $N=3, 4$.  If $N=2$ and we know in addition that $u\in L^\infty(\mathbb{H}^2(-a^2))$, then $u=\dd F\in L^2(\H)$, where $F$ is a harmonic function.  If $N\geq 5$, and in addition $u\in L^\infty(\mathbb H^N(-a^2)),$  then $u = 0$ on $\mathbb{H}^N(-a^2)$.
\end{thm}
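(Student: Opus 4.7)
The strategy is to test the momentum equation against $u\phi_R^2$, with $\phi_R$ a smooth radial cutoff satisfying $\phi_R\equiv 1$ on the geodesic ball $B_R$, $\supp\phi_R\subset B_{2R}$, and $|\nabla\phi_R|\lesssim 1/R$, and then to send $R\to\infty$. The starting point is the Bochner--Weitzenb\"ock identity
$$2\Def^*\Def u = \nabla^*\nabla u - \nabla\dv u - 2\Ric(u),$$
which on $\mathbb{H}^N(-a^2)$ (where $\Ric = -(N-1)a^2 g$), combined with $\dv u = 0$, simplifies to $2\Def^*\Def u = \nabla^*\nabla u + 2(N-1)a^2 u$. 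Pairing with $u\phi_R^2$, integrating by parts, and using $\dv u = 0$ in the convective and pressure terms yields the weighted energy identity
\begin{equation*}
\int \bigl(|\nabla u|^2 + 2(N-1)a^2|u|^2\bigr)\phi_R^2 = -2\int \phi_R \ip{\nabla u,u\otimes\nabla\phi_R} + \int \tfrac{1}{2}|u|^2\, u\cdot\nabla(\phi_R^2) + \int p\, u\cdot\nabla(\phi_R^2).
\end{equation*}

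The heart of the argument is to show that each of the three error terms on the right vanishes as $R\to\infty$. The gradient term is handled by Cauchy--Schwarz using $\nabla u\in L^2$ together with the Poincar\'e inequality on $\mathbb{H}^N$; the latter, a consequence of the positive spectral gap $(N-1)^2a^2/4$ produced by negative curvature (an ingredient unavailable on $\R^N$), yields the crucial bound $u\in L^2$ from the finite Dirichlet hypothesis. For the cubic term one combines H\"older on the annulus $\{R\le r\le 2R\}$ with the hyperbolic Sobolev embedding $\dot H^1\hookrightarrow L^{2N/(N-2)}$: for $N=3,4$, interpolation with $L^2$ gives $u\in L^3$, so the annular integral tends to zero, whereas for $N\ge 5$ the Sobolev exponent drops at or below $3$, which is exactly where the $L^\infty$ hypothesis enters --- it provides the bound $|u|^3\le \|u\|_\infty |u|^2$, reducing matters to the already controlled $L^2$ mass.

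The main obstacle is the pressure integral, because $p$ is specified only modulo a constant and has \emph{a priori} no integrability. My plan is to take the divergence of the momentum equation to derive an elliptic equation of the form $-\Delta p = \dv\dv(u\otimes u) + (\text{curvature terms})$, and then to invoke elliptic theory on $\mathbb{H}^N(-a^2)$ --- well-posed precisely because of the same spectral gap --- to represent $p$ modulo constants as a singular integral quadratic in $u$, whose annular mass is controlled by the same Sobolev--Poincar\'e toolkit used for the cubic term. Once all three error terms are shown to vanish, the coercive left-hand side forces $\nabla u\equiv 0$ and hence $u\equiv 0$ for $N=3,4$, and also for $N\ge 5$ under the $L^\infty$ hypothesis.

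In dimension $2$ the Sobolev embedding degenerates and the pressure cannot be absorbed by this scheme in full. With $u\in L^\infty$ the convective error term still vanishes, and the surviving information is that $u$ is a harmonic $L^2$ $1$-form; since $\mathbb{H}^2$ is simply connected, closed and coclosed $1$-forms are exact, producing $u = \dd F$ with $F$ harmonic, as stated.
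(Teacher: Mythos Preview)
Your approach differs substantially from the paper's. You test the momentum equation directly against $u\phi_R^2$, which forces you to confront the pressure term head-on. The paper instead proves a decomposition result (its Theorem~\ref{decomp}): for $N\geq 3$, every $H^1_0$ divergence-free $1$-form on $\mathbb{H}^N(-a^2)$ is an $H^1$-limit of \emph{compactly supported} divergence-free forms $v_k$. Testing the equation against such $v_k$ annihilates the pressure integral outright, and one then passes to the limit $v_k\to u$ using the same Sobolev/interpolation toolkit you describe, arriving at $\|\Def u\|_{L^2}=0$ and hence $u=0$ via Lemma~\ref{GoodLemmaTWO}. The decomposition theorem in turn rests on the vanishing of $L^2$ harmonic $1$-forms on $\mathbb{H}^N(-a^2)$ for $N\geq 3$ (Dodziuk).

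Your pressure plan has a genuine gap. Taking the divergence of the momentum equation yields $-\Delta p = \dv\dv(u\otimes u)$ up to curvature corrections, and Riesz-transform bounds on $\mathbb{H}^N$ do place a particular solution $p_0$ in $L^q$ whenever $u\otimes u\in L^q$. But the actual pressure satisfies $p=p_0+h$ with $h$ harmonic, and on hyperbolic space harmonic functions are far from being constants: there is an infinite-dimensional family of bounded (let alone unbounded) harmonic functions. Since the theorem assumes only $p\in C^\infty$ with no integrability or decay, you have no mechanism to control $\int h\, u\cdot\nabla(\phi_R^2)$ as $R\to\infty$. This obstruction is precisely what the paper's test-against-$v_k$ device sidesteps.

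Your $N=2$ sketch inherits the same problem: if the pressure error cannot be absorbed, the energy identity gives you nothing, and in particular does not show that $u$ is a harmonic $1$-form. The paper treats $N=2$ by an entirely separate vorticity argument: apply $\dd$ to the equation (eliminating $p$), obtain $(-\Delta+2a^2)\omega+\nabla_u\omega=0$ for the scalar vorticity $\omega=*\dd u$, and test against $\omega\phi_R^2$, using $u\in L^\infty$ to control the convective error; this forces $\omega=0$ and hence $u=\dd F$. (Minor point: your Bochner coefficient should be $(N-1)a^2$, not $2(N-1)a^2$; the correct identity is $2\Def^*\Def=\nabla^*\nabla+\dd\dd^*-\Ric$.)
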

\begin{remark}
In the Euclidean setting it is known (see for example \cite[Ch 3, Proposition 2.7]{SereginBook}) that solutions to the stationary Navier-Stokes problem are smooth.  Hence the smoothness assumption in Theorem \ref{MAINTHM} is quite natural.  Moreover, if we consider the assumption  $\lim_{\abs{x}\rightarrow \infty} u(x)=0$ for solutions on a Euclidean space, then this combined with smoothness, leads to $L^\infty$ bound on the solution.  Hence the $L^\infty$ assumption for dimensions $N=2$ and $N\geq 5$ is not surprising either.
\end{remark}
For convenience, we work with $1$-forms instead of vector fields.  Using the metric, one can easily move between one and the other.  Here $\nabla$ denotes the covariant derivative, and $\Def$ is the deformation tensor, which is the symmetrization of the covariant derivative.   If one likes, the operator $\Def^\ast \Def$ can be replaced by the Hodge Laplacian, $\dd\dd^\ast+\dd^\ast \dd$, or the Bochner Laplacian $\nabla^\ast \nabla$.  They are related by the following formula
\be\label{def}
2\Def^\ast \Def =\nabla^\ast \nabla+\dd\dd^\ast-\Ric= \dd\dd^\ast+\dd^\ast \dd+\dd\dd^\ast- 2 \Ric.
\ee
It will be clear from the proof of Theorem \ref{MAINTHM} that the statement of the result holds for these operators as well.  Hence we have
\begin{cor}\label{cor1}
Theorem \ref{MAINTHM} remains valid if the operator $2 \Def^* \Def u$ in \eqref{NSequation} is replaced by $L=\nabla^\ast\nabla$ or by $L=\dd\dd^\ast+\dd^\ast \dd$.
\end{cor}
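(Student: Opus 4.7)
The plan rests on a single algebraic observation: on $\mathbb{H}^N(-a^2)$ the Ricci tensor is $\Ric = -(N-1)a^2 g$, so on $1$-forms it acts as multiplication by the constant $-(N-1)a^2$, and the divergence-free hypothesis $\dd^* u = 0$ annihilates the $\dd\dd^* u$ term in \eqref{def}. Together these collapse \eqref{def} to the pair of pointwise scalar identities
\[
2\Def^*\Def u \;=\; \nabla^*\nabla u + (N-1)a^2 u \;=\; (\dd\dd^* + \dd^*\dd) u + 2(N-1)a^2 u,
\]
valid on every divergence-free $1$-form on $\mathbb{H}^N(-a^2)$. Thus, restricted to the class of solutions considered in Theorem \ref{MAINTHM}, the three candidate viscous operators differ only by a positive scalar multiple of $u$ and share the same principal symbol.

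I would then re-run the argument of Theorem \ref{MAINTHM} with $L$ in place of $2\Def^*\Def$. Testing the equation $L u + \nabla_u u + \dd p = 0$ against $\varphi u$ for a smooth cutoff $\varphi$ and integrating by parts, the convective and pressure contributions are unchanged (they depend only on $\dd^* u = 0$), while the viscous inner product now produces $\int |\nabla u|^2 \varphi$ or $\int (|\dd u|^2 + |\dd^* u|^2)\varphi$ in place of $2\int |\Def u|^2 \varphi$, plus boundary terms of the same structure as in the $\Def$ case, all controlled by \eqref{FiniteDirichlet} thanks to the comparability of $|\Def u|$, $|\nabla u|$ and $|\dd u|+|\dd^* u|$ on divergence-free $1$-forms (an immediate consequence of the identities above). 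Passing to the limit $\varphi \to 1$ exactly as in the proof of Theorem \ref{MAINTHM}, one obtains $\nabla u \equiv 0$ in the first case, and $\dd u = \dd^* u = 0$ in the second.

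In the parallel case, the full $SO(N)$ holonomy of $\mathbb{H}^N(-a^2)$ admits no nonzero invariant $1$-form, so $\nabla u = 0$ forces $u \equiv 0$ directly, in every dimension. In the harmonic case one is left with exactly the dichotomy of Theorem \ref{MAINTHM}: the hyperbolic Poincar\'e inequality together with \eqref{FiniteDirichlet} place $u$ in $L^2(\mathbb{H}^N(-a^2))$; Dodziuk-type vanishing then disposes of $L^2$ harmonic $1$-forms in dimensions $N = 3, 4$, while the $L^\infty$ hypothesis supplies the final input for $N = 2$ (yielding $u = \dd F$ with $F$ harmonic) and for $N \geq 5$ (forcing $u \equiv 0$). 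The main obstacle I anticipate is verifying that the cutoff boundary terms vanish at the same rate in each of the three cases; since the three operators share a principal symbol and the identities above give uniform control of their quadratic forms by $\int |\nabla u|^2$, this is at worst a bookkeeping exercise, and no new estimate is required.
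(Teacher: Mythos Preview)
Your overall strategy---rerun the proof of Theorem~\ref{MAINTHM} verbatim and observe that only the quadratic form produced by the viscous term changes---is exactly the paper's approach. But two of the steps you describe would not go through as written.

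\textbf{The pressure term.} You propose to test the equation against $\varphi u$ for a cutoff $\varphi$. Then
\[
\int g(\dd p,\varphi u) = -\int p\,g(\dd\varphi,u),
\]
since $\dd^* u=0$; this does \emph{not} vanish, and you have no a priori control on $p$. The paper (for $N\geq 3$) does not test against a cutoff of $u$: it invokes Theorem~\ref{decomp} to approximate $u$ in $H^1$ by $v_k\in\Lambda^1_{c,\sigma}$ and tests against those, so that the pressure drops out exactly. That machinery is what actually replaces the ``boundary terms'' you allude to; the limit $k\to\infty$ is controlled by the interpolation lemma, and this is where the $L^\infty$ hypothesis enters for $N\geq 5$ (to bound $\|u\|_{L^4}$ in \eqref{nonlinearterm}), not at the Dodziuk step.

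\textbf{The harmonic endgame.} Once you have $\dd u=\dd^*u=0$ and $u\in L^2$, Dodziuk's vanishing of $L^2$ harmonic $1$-forms on $\mathbb{H}^N(-a^2)$ applies for \emph{every} $N\geq 3$, not just $N=3,4$; no further use of $L^\infty$ is needed there. For $N=2$ the energy-identity route is unavailable because $u$ need not lie in $\mathbf V$ (Theorem~\ref{decomp} gives the extra summand $\mathbb F$). The paper instead passes to the vorticity equation: with $L=\dd\dd^*+\dd^*\dd$ one gets $(-\Delta)\omega+\nabla_u\omega=0$, and cutoff testing (this time legitimately, since there is no pressure) together with $u\in L^\infty$ forces $\nabla\omega=0$, hence $\omega$ constant, hence $\omega=0$ by $L^2$. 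You do not mention this mechanism, and your sketch for $N=2$ would not reach $\dd u=0$ without it.

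Finally, in the $\nabla^*\nabla$ case your holonomy argument is correct but unnecessary: $\nabla u=0$ makes $|u|$ constant, and $u\in L^2$ on an infinite-volume manifold already forces $u=0$.
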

The statement of Theorem \ref{MAINTHM} and Corollary \ref{cor1} can be extended to more general manifolds.  We do this in section \ref{generalM}.  The next section, Section 2, gathers all the necessary tools, and in Section 3, we bring everything together to establish Theorem \ref{MAINTHM} and Corollary \ref{cor1}.

\section{Preliminaries}
We use the following spaces.  Let $M$ be a complete Riemannian manifold of dimension $N$.  Then
\begin{itemize}
\item $\Lambda^k (M)$ denotes the space of smooth $k$-forms on $M$;
\item $\Lambda^k_c(M)$ denotes the space of smooth $k$-forms with compact support on $M$;
\item $\Lambda^1_{c,\sigma}(M)$ is the space of all smooth, $\dd^*$-closed (co-closed), compactly supported 1-forms on $M$;
\end{itemize}
\subsection{Key Lemmas}
We begin with the following simple observation.

\begin{lemma}[From $\dot H^1$ to $L^2$]\label{VeryGoodLemma}
Let $N \geq 2$, and consider a smooth $1$-form $u \in \Lambda^1(\mathbb{H}^N(-a^2))$ which satisfies  \begin{equation}\label{Louville1}
\int_{\mathbb{H}^N(-a^2)} \big | \nabla u \big |^2 \Vol_{\mathbb{H}^N(-a^2)} < \infty .
\end{equation}
Then $u$ also satisfies
\begin{itemize}
\item $\dd u \in L^2(\mathbb{H}^N(-a^2))$,
\item $u \in L^2(\mathbb{H}^N(-a^2))$,
\item $\dd^* u \in L^2(\mathbb{H}^N(-a^2))$.
\end{itemize}
Moreover, we have the following a priori estimate
\begin{equation}\label{aprioriestimate}
\int_{\mathbb{H}^N(-a^2)} \big | u \big |^2 \Vol_{\mathbb{H}^N(-a^2)} \leq \frac{2}{(N-1)a^2} \int_{\mathbb{H}^N(-a^2)} \big | \nabla u \big |^2 \Vol_{\mathbb{H}^N(-a^2)} .
\end{equation}
\end{lemma}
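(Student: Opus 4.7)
My plan is to establish the a priori bound \eqref{aprioriestimate} via the Weitzenb\"ock formula for 1-forms combined with a cutoff argument adapted to the exponential volume growth of $\mathbb{H}^{N}(-a^{2})$. The two remaining conclusions $\dd u, \dd^{*} u \in L^{2}$ follow immediately from the pointwise inequality $|\dd u|^{2} + |\dd^{*} u|^{2} \leq C_{N}|\nabla u|^{2}$ (since, in an orthonormal frame, $\dd u$ is the antisymmetrization and $\dd^{*} u$ is minus the trace of $\nabla u$) together with the hypothesis \eqref{Louville1}.

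The starting point is the Weitzenb\"ock identity $\dd\dd^{*} + \dd^{*}\dd = \nabla^{*}\nabla + \Ric$ acting on 1-forms. On $\mathbb{H}^{N}(-a^{2})$ the Ricci endomorphism is the scalar $-(N-1)a^{2}I$, so for any smooth compactly supported 1-form $\omega$, pairing with $\omega$ and integrating by parts produces
\[
\int \bigl(|\dd\omega|^{2}+|\dd^{*}\omega|^{2}\bigr) \;=\; \int |\nabla\omega|^{2} - (N-1)a^{2}\int |\omega|^{2},
\]
and since the left side is nonnegative one obtains the Bochner--Poincar\'e inequality $\int|\omega|^{2} \leq \frac{1}{(N-1)a^{2}}\int|\nabla\omega|^{2}$ for compactly supported $\omega$. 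Applying this with $\omega = \chi u$ for a smooth compactly supported cutoff $\chi$, and using the elementary bound $|\nabla(\chi u)|^{2} \leq 2\chi^{2}|\nabla u|^{2} + 2|\dd\chi|^{2}|u|^{2}$ (this AM--GM step is precisely the source of the extra factor of $2$ in the claimed constant $\frac{2}{(N-1)a^{2}}$), one arrives at
\[
\int \chi^{2}|u|^{2} \;\leq\; \frac{2}{(N-1)a^{2}}\int \chi^{2}|\nabla u|^{2} + \frac{2}{(N-1)a^{2}}\int |\dd\chi|^{2}|u|^{2}.
\]

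The main obstacle is to take the limit $\chi\to 1$ and discard the error $\int|\dd\chi|^{2}|u|^{2}$ without any a priori $L^{2}$ control on $u$. I would handle this by specializing $\chi$ to an exponentially decaying weight: set $\chi_{\beta}(x)=e^{-\beta r(x)}$, where $r$ is the geodesic distance from a fixed basepoint and $\beta>0$ is small, suitably truncated to compact support by a standard smooth cutoff at radius $R$. The crucial feature is that $|\dd\chi_{\beta}|^{2}=\beta^{2}\chi_{\beta}^{2}$ on the interior (because $|\dd r|\equiv 1$), so the error term becomes $\frac{2\beta^{2}}{(N-1)a^{2}}\int\chi_{\beta}^{2}|u|^{2}$ and can be absorbed into the left-hand side whenever $\beta^{2}$ is sufficiently small. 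After dispatching the truncation contribution (which lives on an annulus and vanishes as $R\to\infty$), rearranging yields a bound of the form
\[
\int e^{-2\beta r}|u|^{2} \;\leq\; \frac{2}{(N-1)a^{2}-c\beta^{2}}\int|\nabla u|^{2}.
\]
Finally, sending $\beta\to 0^{+}$ and applying the monotone convergence theorem (since $e^{-2\beta r}\nearrow 1$ pointwise) produces both $u\in L^{2}(\mathbb{H}^{N}(-a^{2}))$ and the sharp constant in \eqref{aprioriestimate}.
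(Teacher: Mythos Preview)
Your argument has a genuine gap at the step ``After dispatching the truncation contribution (which lives on an annulus and vanishes as $R\to\infty$).'' Write $\chi=\psi_R\,e^{-\beta r}$ with a standard cutoff $\psi_R$. Expanding $|\dd\chi|^2\le 2e^{-2\beta r}|\dd\psi_R|^2+2\beta^2\psi_R^2e^{-2\beta r}$, your displayed inequality produces, besides the absorbable $\beta^2$-term, the annular error
\[
\frac{C}{R^2}\int_{\{R\le r\le 2R\}} e^{-2\beta r}\,|u|^2 .
\]
Nothing in the hypotheses controls this: $|\nabla u|\in L^2$ alone gives no a priori bound on $\int e^{-2\beta r}|u|^2$ for any $\beta$, and the absorption step requires $\beta$ small, so you cannot first take $\beta$ large to force decay. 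A dyadic iteration $F(R)\le C+\tfrac{C'}{R^2}F(2R)$ does not close either, since $F(2^jR)$ may grow like $\exp(c\,2^jR)$ on hyperbolic space. Thus the limit $R\to\infty$ is unjustified, and the whole exponential-weight detour does not rescue the crude bound $|\nabla(\chi u)|^2\le 2\chi^2|\nabla u|^2+2|\dd\chi|^2|u|^2$ that created the problem.

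The paper avoids this by testing the Weitzenb\"ock identity against $\phi_R^2u$ directly rather than applying the Poincar\'e inequality to $\phi_R u$. After integrating by parts on each of $\nabla^*\nabla u$, $\dd\dd^*u$, $\dd^*\dd u$, every cross term has the structure $2\phi_R\,\dd\phi_R\cdot(\text{one factor of }u)\cdot(\text{one factor of }\nabla u,\ \dd u,\text{ or }\dd^*u)$, and Cauchy's inequality then places the unknown $|u|^2$ with $\phi_R^2$ (absorbable) while $|\dd\phi_R|^2\le 4/R^2$ multiplies only the \emph{known} $L^2$ quantities $|\nabla u|^2,|\dd u|^2,|\dd^*u|^2$. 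Hence the errors vanish as $R\to\infty$ using only the hypothesis, with no exponential weight needed. Equivalently, had you kept the full Weitzenb\"ock identity for $\chi u$ instead of dropping $|\dd(\chi u)|^2+|\dd^*(\chi u)|^2$, the pointwise identity $|\dd\chi|^2|u|^2=|\dd\chi\wedge u|^2+\langle\dd\chi,u\rangle^2$ cancels the bad $|\dd\chi|^2|u|^2$ term exactly, and a standard cutoff suffices.
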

\begin{proof}
Consider a smooth $1$-form $u \in \Lambda^1(\mathbb{H}^N(-a^2))$ which satisfies \eqref{Louville1}.  Recall the following expressions of $\dd u$ and $\dd^* u$ in terms of $\nabla u$
\begin{equation}\label{goodexpressions}
\begin{split}
\dd u & = \eta^\alpha \wedge \nabla_{e_{\alpha}} u , \\
\dd^* u & = \iota_{e_{\alpha}} \nabla_{e_{\alpha}} u ,
\end{split}
\end{equation}
where $\{e_{\alpha} : 1 \leq \alpha \leq N \}$ is a local orthonormal frame of $T\mathbb{H}^N(-a^2)$, and $\{\eta^{\alpha} : 1 \leq \alpha \leq N \}$ is the associated dual local frame of $T^* \mathbb{H}^N(-a^2)$.   Equivalently, we can write in coordinates
\begin{equation}\label{goodexpressions2}
\begin{split}
\dd u & = \frac 12(\partial_i u_j -\partial_j u_i)\dd x^i \wedge \dd x^j=  \frac 12(\nabla_i u_j -\nabla_j u_i)\dd x^i \wedge \dd x^j\quad\mbox{since} \ \Gamma^l_{ij}=\Gamma^l_{ji},\\
\dd^* u & =-\nabla^j u_j,
\end{split}
\end{equation}
where we sum over repeated indices.
Then \eqref{goodexpressions}  or  \eqref{goodexpressions2} immediately leads to the following two estimates
\begin{equation}
\begin{split}
\big \| \dd u \big \|_{L^2(\mathbb{H}^N(-a^2))} & \leq 2^{\frac{1}{2}} \big \| \nabla u  \big \|_{L^2(\mathbb{H}^N(-a^2))} ,\\
\big \| \dd^* u \big \|_{L^2(\mathbb{H}^N(-a^2))} & \leq N \big \| \nabla u  \big \|_{L^2(\mathbb{H}^N(-a^2))} .
\end{split}
\end{equation}
Next, we proceed to prove that $u \in L^2(\mathbb{H}^N(-a^2))$. Since $\Ric w=-(N-1)a^2 w$ on $\mathbb{H}^N(-a^2)$, by the Weitzenb\"ock formula
\begin{equation}\label{W}
\nabla^* \nabla u = \dd \dd^* u + \dd^* \dd u - \Ric u ,
\end{equation}
we have
\begin{equation}\label{KEY}
\nabla^* \nabla u = \dd \dd^* u + \dd^* \dd u  + (N-1)a^2 u .
\end{equation}
Next, take a preferred point of reference $O \in \mathbb{H}^N(-a^2)$. For any $R >1$, consider a bump function $\phi_R \in C^{\infty}_c (\mathbb{H}^N(-a^2))$ which satisfies
\begin{equation}\label{Bumpfunction}
\begin{split}
& \chi_{B_O(R)} \leq \phi_R \leq \chi_{B_O(2R)} , \\
& \big | \nabla \phi_R \big | = \big | \dd \phi_R  \big | \leq \frac{2}{R} .
\end{split}
\end{equation}
Now integrate \eqref{KEY} against $\phi_R^2u$   to obtain (we drop the notation for $ \Vol_{\mathbb{H}^N(-a^2)} $)
\begin{equation}\label{ONE}
\begin{split}
 \int_{\mathbb{H}^N(-a^2)} g (\nabla^* \nabla u , \phi_R^2 u  ) &=  \int_{\mathbb{H}^N(-a^2)} g (\dd \dd^* u , \phi_R^2 u )  +
\int_{\mathbb{H}^N(-a^2)} g (\dd^* \dd u , \phi_R^2 u ) \\
 &\qquad+ (N-1) a^2 \int_{\mathbb{H}^N(-a^2)} \phi_R^2 |u|^2 .
\end{split}
\end{equation}
Since $\phi_R^2 u$ has compact support in $\mathbb{H}^N(-a^2)$, we can integrate the left hand side of \eqref{ONE} by parts.  Hence
\begin{equation}\label{TWO}
\begin{split}
 \int_{\mathbb{H}^N(-a^2)} g (\nabla^* \nabla u , \phi_R^2 u )
=  \int_{\mathbb{H}^N(-a^2)} g (\nabla u , 2 \phi_R \dd \phi_R \otimes u)
+ \int_{\mathbb{H}^N(-a^2)} \phi_R^2 \big | \nabla u \big |^2   .
\end{split}
\end{equation}
Similarly, from the right hand side of \eqref{ONE} we get (using $\dd^\ast(f u)=-g(\dd f, u)+f\dd^\ast u $, for a function $f$)
\begin{equation}\label{THREE}
\begin{split}
 \int_{\mathbb{H}^N(-a^2)} g (\dd \dd^* u , \phi_R^2 u )
=  \int_{\mathbb{H}^N(-a^2)} -2 \phi_R \cdot \dd^* u \cdot g(\dd \phi_R , u )
+ \int_{\mathbb{H}^N(-a^2)} \phi_R^2  |\dd^* u |^2    ,
\end{split}
\end{equation}
and also that
\begin{equation}\label{FOUR}
\begin{split}
 \int_{\mathbb{H}^N(-a^2)} g(\dd^* \dd u , \phi_R^2 u)
=   \int_{\mathbb{H}^N(-a^2)} g (\dd u , 2 \phi_R \dd \phi_R \wedge u )
+ \int_{\mathbb{H}^N(-a^2)} \phi_R^2 \big | \dd u\big |^2   .
\end{split}
\end{equation}
By combining \eqref{ONE}, \eqref{TWO}, \eqref{THREE}, and \eqref{FOUR}, we yield
\begin{equation}\label{FIVE}
\begin{split}
&\int_{\mathbb{H}^N(-a^2)} g ( \nabla u , 2\phi_R\dd\phi_R\otimes u )  +\int_{\mathbb{H}^N(-a^2)}  \phi^2_R \abs{\nabla u}^2    \\
&=\int_{\mathbb{H}^N(-a^2)} \phi^2_R( \abs{ \dd^* u }^2 + \big | \dd u \big |^2 )
-\int_{\mathbb{H}^N(-a^2)} 2 \phi_R \cdot \dd^* u \cdot g ( \dd \phi_R , u )     \\
&\quad +   \int_{\mathbb{H}^N(-a^2)} 2 \phi_R \cdot g ( \dd u , \dd \phi_R \wedge u    )
+(N-1)a^2 \int_{\mathbb{H}^N(-a^2)} \phi_R^2 |u|^2.  \end{split}
\end{equation}
Rearranging and using $\int_{\mathbb{H}^N(-a^2)} \phi^2_R( \abs{ \dd^* u }^2 + \big | \dd u \big |^2 ) \geq 0$, we have
\begin{equation}\label{FIVEa}
\begin{split}
&(N-1)a^2 \int_{\mathbb{H}^N(-a^2)} \phi_R^2 |u|^2\\
&\leq \int_{\mathbb{H}^N(-a^2)}  \phi^2_R \abs{\nabla u}^2  +\int_{\mathbb{H}^N(-a^2)} g ( \nabla u , 2\phi_R\dd\phi_R\otimes u )   \\
&+\int_{\mathbb{H}^N(-a^2)} 2 \phi_R \cdot \dd^* u \cdot g ( \dd \phi_R , u )
-   \int_{\mathbb{H}^N(-a^2)} 2 \phi_R \cdot g ( \dd u , \dd \phi_R \wedge u    ) .
  \end{split}
\end{equation}
We now estimate the last three terms on the right hand side of \eqref{FIVEa} by the means of the  Cauchy's inequality with $\epsilon$.  We get
\begin{equation}\label{SIX}
\begin{split}
 \bigg |  \int_{\mathbb{H}^N(-a^2)} 2\phi_R \cdot g (\nabla u , \dd \phi_R \otimes u )   \bigg |
 \leq \epsilon \int_{\mathbb{H}^N(-a^2)} \phi_R^2 \big | u \big |^2  +
\frac{1}{\epsilon}\int_{\mathbb{H}^N(-a^2)}  \frac{4}{R^2} \big | \nabla u\big |^2,
\end{split}
\end{equation}
 \begin{equation}\label{SEVEN}
\begin{split}
\bigg |\int_{\mathbb{H}^N(-a^2)} 2 \phi_R \cdot \dd^* u \cdot g ( \dd \phi_R , u )     \bigg |
 \leq \epsilon   \int_{\mathbb{H}^N(-a^2)}  \phi_R^2 \big | u \big |^2 +
\frac{1}{\epsilon} \int_{\mathbb{H}^N(-a^2)}  \frac{4}{R^2} \big | \dd^* u \big |^2  ,
\end{split}
\end{equation}
 \begin{equation}\label{EIGHT}
\begin{split}
\bigg | \int_{\mathbb{H}^N(-a^2)} 2 \phi_R \cdot g ( \dd u , \dd \phi_R \wedge u    )   \bigg |
  \leq \epsilon   \int_{\mathbb{H}^N(-a^2)}  \phi_R^2 \big | u \big |^2  +
\frac{1}{\epsilon} \int_{\mathbb{H}^N(-a^2)}  \frac{4}{R^2} \big | \dd u \big |^2  .
\end{split}
\end{equation}
It follows
\begin{equation}\label{NINE}
\begin{split}
& (N-1) a^2 \int_{\mathbb{H}^N(-a^2)} \phi_R^2 \big | u \big |^2 \\
&\leq  \int_{\mathbb{H}^N(-a^2)} \phi_R^2 \big | \nabla u \big |^2 +3 \epsilon \int_{\mathbb{H}^N(-a^2)} \phi_R^2 \big | u \big |^2
+ \frac{4}{\epsilon R^2} \int_{\mathbb{H}^N(-a^2)}  \bigg ( \big | \nabla u \big |^2 + \big | \dd u \big |^2 + \big | \dd^* u \big |^2    \bigg )  \\
& \leq  \int_{\mathbb{H}^N(-a^2)} \phi_R^2 \big | \nabla u \big |^2 +3 \epsilon \int_{\mathbb{H}^N(-a^2)} \phi_R^2 \big | u \big |^2
+ \frac{4(N^2+3)}{\epsilon R^2} \int_{\mathbb{H}^N(-a^2)}   \big | \nabla u \big |^2    .
\end{split}
\end{equation}
Now, if $\epsilon = \frac{(N-1)a^2}{6}$, we obtain for all $R > 0 $
\begin{equation}
\frac{(N-1)a^2}{2} \int_{\mathbb{H}^N(-a^2)} \phi_R^2 \big | u \big |^2   \leq \bigg ( 1 + \frac{4(N^2+3)}{R^2} \frac{6}{(N-1)a^2} \bigg ) \int_{\mathbb{H}^N(-a^2)} \big | \nabla u \big |^2   .
\end{equation}
So, by taking the limit on both sides of the above estimate, it follows from the dominated convergence theorem that the following estimate holds
\begin{equation}
\frac{(N-1)a^2}{2} \int_{\mathbb{H}^N(-a^2)} \big | u \big |^2
\leq \int_{\mathbb{H}^N(-a^2)} \big | \nabla u \big |^2    .
\end{equation}
\end{proof}


%

\begin{lemma}\label{GoodLemmaTWO}
Consider a smooth $1$-form $u$ on $\mathbb{H}^N (-a^2)$, which satisfies the following finite Dirichlet norm property
\begin{equation}\label{Condition}
\int_{\mathbb{H}^N(-a^2)} \big | \nabla u \big |^2 \Vol_{\mathbb{H}^N(-a^2)} < \infty .
\end{equation}
Then it follows that the following identity holds
\begin{equation}\label{EASYEASYEASY}
\begin{split}
\big \| \Def u \big \|_{L^2( \mathbb{H}^N(-a^2))}^2 &=  \big \| \dd^* u \big \|_{L^2( \mathbb{H}^N(-a^2))}^2 +
\frac 12\norm {\dd u }_{L^2( \mathbb{H}^N(-a^2))}^2 \\
&\qquad+ (N-1)a^2 \big \| u \big \|_{L^2( \mathbb{H}^N(-a^2))}^2 .
\end{split}
\end{equation}
\end{lemma}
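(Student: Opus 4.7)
The strategy is to interpret \eqref{EASYEASYEASY} as the $L^2$ pairing, against $u$ itself, of the operator identity \eqref{def} in the form $2\Def^*\Def = 2\dd\dd^* + \dd^*\dd - 2\Ric$. Substituting $\Ric u = -(N-1)a^2 u$ on $\mathbb{H}^N(-a^2)$ and using that $\dd$ and $\dd^*$ are formal adjoints, the formal identity
\[
\int g(2\Def^*\Def u, u) = 2\int|\dd^* u|^2 + \int|\dd u|^2 + 2(N-1)a^2\int|u|^2
\]
becomes exactly \eqref{EASYEASYEASY} once the left-hand side is recognized as $2\int|\Def u|^2$. All three integrations by parts are legitimate when $u$ has compact support; the real task is to remove that restriction.

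To do so I would reuse the cutoff machinery from the proof of Lemma \ref{VeryGoodLemma}. With the bump functions $\phi_R$ of \eqref{Bumpfunction}, I would pair the operator identity against $\phi_R^2 u$ and integrate each of the three operator pieces by parts: the $\dd\dd^*$ and $\dd^*\dd$ contributions are already computed in \eqref{THREE} and \eqref{FOUR}, and the $\Def^*\Def$ contribution similarly yields $\int g(\Def^*\Def u, \phi_R^2 u) = \int \phi_R^2 |\Def u|^2 + E_R$, where $E_R$ is a single-factor-of-$\dd\phi_R$ error term bounded by Cauchy--Schwarz by $(C/R)\norm{\Def u}_{L^2}\norm{u}_{L^2}$.

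Collecting the three error terms so produced, each is of order $1/R$ because $|\dd\phi_R|\leq 2/R$ and because Lemma \ref{VeryGoodLemma}, together with the pointwise bound $|\Def u|\leq|\nabla u|$, places $u,\nabla u,\dd u,\dd^* u$, and $\Def u$ all in $L^2(\mathbb{H}^N(-a^2))$ under hypothesis \eqref{Condition}. Thus every error term vanishes as $R\to\infty$, and since the four integrands $|\Def u|^2$, $|\dd^* u|^2$, $|\dd u|^2$, $|u|^2$ all lie in $L^1(\mathbb{H}^N(-a^2))$, the dominated convergence theorem passes the limit through the four localized quadratic integrals and recovers \eqref{EASYEASYEASY}. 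The only technical obstacle --- ensuring that every quantity produced by integration by parts already belongs to $L^2$ --- is exactly the content of Lemma \ref{VeryGoodLemma}, so no new analysis is required beyond this bookkeeping.
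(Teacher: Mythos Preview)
Your proposal is correct and rests on the same ingredients as the paper: the operator identity $2\Def^*\Def = 2\dd\dd^* + \dd^*\dd - 2\Ric$, the fact that $\Ric = -(N-1)a^2$ on $\mathbb{H}^N(-a^2)$, and Lemma~\ref{VeryGoodLemma} to guarantee that $u,\dd u,\dd^*u,\Def u$ are all in $L^2$.

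The one genuine difference is in how the compact-support restriction is removed. You test the operator identity directly against $\phi_R^2 u$, track the three $O(1/R)$ commutator errors coming from $\dd\phi_R$, and send $R\to\infty$. The paper instead first uses the cutoffs to show $u\in H^1_0(\mathbb{H}^N(-a^2))$, then picks an approximating sequence $w_k\in\Lambda^1_c$ with $w_k\to u$ in $H^1$, establishes the identity for each $w_k$ by clean integration by parts (no error terms), and passes to the limit using $|\Def w|\leq|\nabla w|$ and the analogous pointwise bounds for $\dd,\dd^*$. Your route is slightly more hands-on (you must compute $\Def(\phi_R^2 u)=\phi_R^2\Def u + \text{sym}(\dd\phi_R^2\otimes u)$ explicitly), while the paper's route trades that computation for the abstract density of $\Lambda^1_c$ in $H^1_0$; both are standard and equally rigorous here.
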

\begin{proof}
Let $u$ be a smooth vector field on $\mathbb{H}^N(-a^2)$ which satisfies condition \eqref{Condition}. By Lemma \ref{VeryGoodLemma}, we have $u \in L^2(\mathbb{H}^N(-a^2))$.  Next, as in the proof of Lemma \ref{VeryGoodLemma}, we consider for each $R > 0$ a bump function $\phi_R \in C^{\infty}(\mathbb{H}^N (-a^2))$, which satisfies the conditions in \eqref{Bumpfunction}. Then, it is easy to verify that we have the following property
\begin{equation}\label{H10}
\lim_{R \rightarrow + \infty} \big \| \phi_R u - u  \big \|_{H^1(\mathbb{H}^N(-a^2))} = 0 ,
\end{equation}
which simply tells us that $u \in H^1_0(\mathbb{H}^N(-a^2))$. As a result, we can now find a sequence $\{w_k\}_{k=1}^{\infty}$ in $\Lambda_c^1(\mathbb{H}^N(-a^2))$ such that
\begin{equation}\label{limiting}
\lim_{k\rightarrow + \infty } \bigg ( \big \| w_k - u \big \|_{L^2(\mathbb{H}^N(-a^2))}^2 + \big \| \nabla (w_k - u ) \big \|_{L^2(\mathbb{H}^N(-a^2))}^2 \bigg ) = 0 .
\end{equation}
From \eqref{def}, which holds for any smooth $1$-form $w$,
we have
\begin{equation}
2 \Def^* \Def w = 2 \dd \dd^* w + \dd^* \dd w + 2(N-1)a^2 w .
\end{equation}
By applying the above identity to each $w_k$ and performing integration by parts, we deduce
\begin{equation}\label{EASYIdentity}
\begin{split}
2 \big \| \Def w_k \big \|_{L^2(\mathbb{H}^N(-a^2))}^2 &= 2 \big \| \dd^* w_k \big \|_{L^2(\mathbb{H}^N(-a^2))}^2 + \big \| \dd w_k \big \|_{L^2(\mathbb{H}^N(-a^2))}^2\\
&\quad+ 2(N-1)a^2 \big \| w_k \big \|_{L^2(\mathbb{H}^N(-a^2))}^2 .
\end{split}
\end{equation}
Due to the pointwise estimate $\big | \Def w \big | \leq \big | \nabla w \big | $, which holds on $\mathbb{H}^N(-a^2)$ for any smooth $1$-form $w$, it follows that we have
\begin{equation}
\begin{split}
\abs{\big \| \Def w_k \big \|_{L^2(\mathbb{H}^N(-a^2))} - \big \| \Def u \big \|_{L^2(\mathbb{H}^N(-a^2))}   } & \leq
\big \| \Def (w_k - u) \big \|_{L^2(\mathbb{H}^N(-a^2))} \\
& \leq \big \| \nabla (w_k - u ) \big \|_{L^2(\mathbb{H}^N(-a^2))} .
\end{split}
\end{equation}
In light of property \eqref{limiting}, we can pass to the limit in the above inequality and deduce
\begin{equation}\label{limitingTWO}
\lim_{k\rightarrow + \infty } \big \| \Def w_k \big \|_{L^2(\mathbb{H}^N(-a^2))} = \big \| \Def u \big \|_{L^2(\mathbb{H}^N(-a^2))} .
\end{equation}
By essentially the same kind of reasoning, we also get the following limiting properties
\begin{equation}\label{limitingTHREE}
\begin{split}
\lim_{k\rightarrow + \infty } \big \| \dd^* w_k \big \|_{L^2(\mathbb{H}^N(-a^2))} & = \big \| \dd^* u \big \|_{L^2(\mathbb{H}^N(-a^2))}, \\
\lim_{k\rightarrow + \infty } \big \| \dd w_k \big \|_{L^2(\mathbb{H}^N(-a^2))} & = \big \| \dd u \big \|_{L^2(\mathbb{H}^N(-a^2))} .
\end{split}
\end{equation}
Then by  \eqref{limitingTWO} and \eqref{limitingTHREE}, we can now take $k \rightarrow \infty$ in  \eqref{EASYIdentity}  and deduce that identity \eqref{EASYEASYEASY} must hold for $u$. This completes the proof of Lemma \ref{GoodLemmaTWO}.
\end{proof}

\begin{lemma}[Interpolation Estimates]

Let $u \in \Lambda^1(\mathbb{H}^N(-a^2))$ be smooth and satisfy
\begin{equation}\label{h1dot}
\int_{\mathbb{H}^N(-a^2)} \big | \nabla u \big |^2 \Vol_{\mathbb{H}^N(-a^2)} < \infty
\end{equation}
for $N\geq 2$.  If $N\geq 5$, in addition assume $u \in  L^\infty$.  Then $u \in L^p(\mathbb{H}^N(-a^2))$ for $p=3, 4$.  More precisely, we have
\be\label{L3}
\begin{split}
\norm{u}_{L^3(\mathbb{H}^N(-a^2))}&\lesssim \norm{u}_{H^1(\mathbb{H}^N(-a^2))}, \qquad 2\leq N\leq 6,\\
\norm{u}_{L^3(\mathbb{H}^N(-a^2))}&\lesssim \norm{u}^\frac 23_{H^1(\mathbb{H}^N(-a^2))}\norm{u}^\frac 13_{L^\infty(\mathbb{H}^N(-a^2))}, \quad N\geq 7.\\
\end{split}
\ee
And
\be\label{L4}
\begin{split}
\norm{u}_{L^4(\mathbb{H}^N(-a^2))}&\lesssim \norm{u}_{H^1(\mathbb{H}^N(-a^2))}, \quad N=2, 3, 4,\\
\norm{u}_{L^4(\mathbb{H}^N(-a^2))}&\lesssim \norm{u}^\frac 12_{H^1(\mathbb{H}^N(-a^2))}\norm{u}^\frac 12_{L^\infty(\mathbb{H}^N(-a^2))}, \quad N\geq 5.
\end{split}
\ee
\end{lemma}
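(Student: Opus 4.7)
The plan is to combine the Sobolev embedding on $\mathbb{H}^N(-a^2)$ with the a priori estimate $\norm{u}_{L^2(\mathbb{H}^N(-a^2))}\lesssim\norm{\nabla u}_{L^2(\mathbb{H}^N(-a^2))}$ already established in Lemma \ref{VeryGoodLemma}. Since $|u|$ is a nonnegative scalar and Kato's inequality $|\nabla |u||\leq |\nabla u|$ holds pointwise for smooth $1$-forms, scalar Sobolev embeddings on the complete manifold $\mathbb{H}^N(-a^2)$ may be applied directly to $|u|$. In particular, one has $H^1(\mathbb{H}^N(-a^2))\hookrightarrow L^q(\mathbb{H}^N(-a^2))$ for every $q\in[2,\frac{2N}{N-2}]$ when $N\geq 3$, and for every $q\in[2,\infty)$ when $N=2$. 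The strategy is: whenever the target exponent lies inside this admissible range, invoke Sobolev directly; when it does not, use the elementary H\"older interpolation $\int |u|^p\leq \norm{u}_{L^\infty}^{p-2}\int|u|^2$ together with the standing $L^\infty$ hypothesis.

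For the $L^3$ bound, the threshold $3\leq\frac{2N}{N-2}$ holds precisely when $N\leq 6$, and the case $N=2$ is subsumed in the full range $[2,\infty)$. Hence for $2\leq N\leq 6$ the direct Sobolev embedding yields $\norm{u}_{L^3}\lesssim\norm{u}_{H^1}$. For $N\geq 7$ the Sobolev exponent drops below $3$, so direct embedding fails; instead one uses
\begin{equation*}
\int_{\mathbb{H}^N(-a^2)} |u|^3 \leq \norm{u}_{L^\infty(\mathbb{H}^N(-a^2))}\int_{\mathbb{H}^N(-a^2)}|u|^2,
\end{equation*}
combined with Lemma \ref{VeryGoodLemma} and the assumed $L^\infty$ control, to obtain $\norm{u}_{L^3}\lesssim\norm{u}_{L^2}^{2/3}\norm{u}_{L^\infty}^{1/3}\lesssim\norm{u}_{H^1}^{2/3}\norm{u}_{L^\infty}^{1/3}$.

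The $L^4$ bound proceeds along identical lines. Now $4\leq\frac{2N}{N-2}$ holds exactly when $N\leq 4$, so Sobolev embedding handles $N=2,3,4$ and gives $\norm{u}_{L^4}\lesssim\norm{u}_{H^1}$. For $N\geq 5$ the Sobolev exponent is less than $4$, and one instead uses $\int|u|^4\leq\norm{u}_{L^\infty}^2\int|u|^2$ together with Lemma \ref{VeryGoodLemma} and the $L^\infty$ hypothesis, obtaining $\norm{u}_{L^4}\lesssim\norm{u}_{H^1}^{1/2}\norm{u}_{L^\infty}^{1/2}$.

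There is no serious obstacle here: the proof is essentially bookkeeping of Sobolev exponents versus dimension. The only nontrivial ingredient beyond H\"older's inequality and Lemma \ref{VeryGoodLemma} is the scalar Sobolev embedding on $\mathbb{H}^N(-a^2)$, which is standard for complete manifolds of bounded geometry (and in particular for constant negative sectional curvature). The mild subtlety worth verifying is that on $\mathbb{H}^N(-a^2)$ one genuinely obtains the \emph{inhomogeneous} bound $\norm{u}_{L^q}\lesssim\norm{u}_{H^1}$ on the full range $[2,\frac{2N}{N-2}]$ (and up to any finite exponent when $N=2$), which is a well-known consequence of the spectral gap of $-\Delta$ on hyperbolic space.
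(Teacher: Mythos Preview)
Your proposal is correct and follows essentially the same approach as the paper: Sobolev embedding $H^1\hookrightarrow L^q$ (combined with Lemma \ref{VeryGoodLemma} to land in $H^1$) handles the low-dimensional cases, and interpolation between $L^2$ and $L^\infty$ handles the high-dimensional ones. The paper's case breakdown is phrased slightly differently (e.g.\ citing a Ladyzhenskaya inequality for $N=2$ and explicitly interpolating $L^2$--$L^6$ for $N=3$), but the substance is the same; your explicit mention of Kato's inequality to reduce to scalar Sobolev embeddings is a nice clarification.
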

\begin{proof}  The $L^4$ estimate for $N=2$ follows from the Ladyzhenskaya inequality on $\mathbb{H}^N(-a^2)$ (see \cite[Lemma 2.11]{CC13}).  For $N=3$, interpolate between $L^2$ and $L^6$ and use the Sobolev embedding $H^1 \hookrightarrow L^6$.  For $N=4$ this is just the Sobolev embedding $H^1 \hookrightarrow L^4$ and for $N\geq 5$ it follows by interpolation between $L^2$ and $L^\infty$.  Similarly,  $L^3$ estimates follow by interpolation between $L^2$ and $L^{\frac{2N}{N-2}}$ for $2\leq N\leq 5$ and Sobolev embedding $H^1\hookrightarrow L^{\frac{2N}{N-2}}$.  For $N=6$ this is just the Sobolev embedding  $H^1\hookrightarrow L^3$. For $N\geq 7$, the $L^3$ estimates follow by interpolation between $L^2$ and $L^\infty$.
 \end{proof}
\subsection{Currents}\label{currents}
We briefly recall some basic language about currents from \cite{DeRhamEng}. Let $M$ be an $N-$dimensional manifold, and $\Lambda^k_c(M)$ denote the space of smooth $k$-forms that are compactly supported in $M$.  Then a current $T$ is a linear functional on $\Lambda^k_c(M)$, with the action denoted by $T[\phi]$ for $\phi\in \Lambda^k_c(M)$ \cite[p.34]{DeRhamEng}.
%
For example, if $\alpha$ is a locally integrable $(N-k)$-form, then
\be\label{Talpha}
T_\alpha[\phi]=\int_M \alpha \wedge \phi.
\ee
The scalar product on forms is defined by
\be\label{p2b1}
(w,v)=\int_M g(w,v) \Vol_M=\int_M w \wedge \ast v \Vol_M.
\ee
Note that
\[
(w,v)=T_w[\ast v].
\]
Then a scalar product of a current $T$ with a form $v$ \cite[p.102]{DeRhamEng} is
\be\label{p2b}
(T,v)=T[\ast v].
\ee
If $v$ is compactly supported, then we have \cite[p.105]{DeRhamEng}
\be\label{p2b2}
(\dd T,v)=(T,\dd^\ast v), \quad (\dd^\ast T,v)=(T,\dd v).
\ee
Next we recall a lemma from \cite{CC13}, which rephrases \cite[Thm 17']{DeRhamEng}.
\begin{lemma}\label{p2}
 Let $T$ be a current of degree $1$.
Then $(T,v)=0$ for all
$v \in \Lambda^1_{c,\sigma}(M)$ if and only if $T=dP$ for some $0$ degree current $P$.
\end{lemma}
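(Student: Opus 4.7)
The plan is to verify the $(\Leftarrow)$ direction directly from the duality relation \eqref{p2b2}, and to deduce the $(\Rightarrow)$ direction by repackaging the hypothesis into a form where de Rham's Theorem 17' from \cite{DeRhamEng} can be invoked.

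First I would handle $(\Leftarrow)$: assuming $T = \dd P$ for some $0$-current $P$, any $v \in \Lambda^1_{c,\sigma}(M)$ is compactly supported, so \eqref{p2b2} yields $(T,v) = (\dd P, v) = (P, \dd^\ast v) = 0$, since $\dd^\ast v = 0$ by the definition of $\Lambda^1_{c,\sigma}(M)$. This is a one-line verification.

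For the forward direction, the idea is to translate the hypothesis $(T,v) = 0$ for all $v \in \Lambda^1_{c,\sigma}(M)$ into the language in which de Rham's duality theorem is naturally phrased, namely as a statement about $T$ annihilating a subspace of compactly supported test forms. By definition \eqref{p2b}, the hypothesis reads $T[\ast v] = 0$ for every such $v$. I would then note that the Hodge star is a linear isomorphism $\ast \colon \Lambda^1_c(M) \to \Lambda^{N-1}_c(M)$, and that on $1$-forms $\dd^\ast$ coincides, up to a sign depending on $N$, with $\ast \dd \ast$; consequently $\dd^\ast v = 0$ is equivalent to $\dd(\ast v) = 0$, so that the set $\{\ast v : v \in \Lambda^1_{c,\sigma}(M)\}$ is exactly the space of all smooth, closed, compactly supported $(N-1)$-forms on $M$. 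The hypothesis therefore becomes the statement that $T$ annihilates every closed, compactly supported $(N-1)$-form, and de Rham's Theorem 17' in \cite{DeRhamEng} then produces a $0$-current $P$ with $T = \dd P$.

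The only bookkeeping required is this Hodge-star translation, namely verifying the equivalence $\dd^\ast v = 0 \Leftrightarrow \dd(\ast v) = 0$ and the preservation of compact support; both are routine, so there is no substantive obstacle. The lemma is essentially a direct reformulation of a classical duality result, and the detailed version of this reformulation has already been carried out in \cite{CC13}.
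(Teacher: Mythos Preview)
Your proposal is correct and matches the paper's approach exactly. The paper does not give its own proof of this lemma; it simply cites it from \cite{CC13} as a rephrasing of \cite[Thm~17']{DeRhamEng}, and your sketch spells out precisely that rephrasing via the Hodge star (together with the trivial $(\Leftarrow)$ direction from \eqref{p2b2}).
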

\subsection{Functional analysis framework.}\label{FunctAnalysis}

Now consider the linear space $\textbf{V}$ defined by
\begin{equation}
\textbf{V} = \overline{\Lambda_{c, \sigma}^1 (\mathbb{H}^N(-a^2))}^{H^1} ,
\end{equation}
where $\Lambda_{c, \sigma}^1 (\mathbb{H}^N(-a^2))$ is the space of all smooth, $\dd^*$-closed, compactly supported $1$-forms on $\mathbb{H}^N(-a^2)$.
So $\textbf{V}$ is the completion of $\Lambda_{c, \sigma}^1 (\mathbb{H}^N(-a^2))$ in the $H^1$ norm. We now state and prove the following result, which was obtained in \cite{CC13} in 2D.
\begin{thm}\label{decomp}
Let $N = 2$. Then
\begin{equation}\label{decomp1}
\big \{ v \in H^1_0(\H) : \dd^* v = 0 \big \} = \textbf{V} \oplus \mathbb{F} ,
\end{equation}
where
\[
\mathbb{F} = \{ \alpha \in L^2(\H) : \alpha=\dd F,\  F \ \mbox{is a harmonic function on} \  \H \} .
\]
If $N \geq 3$, then we have
\begin{equation}\label{decomp2}
\big \{ v \in H^1_0(\mathbb{H}^N(-a^2)) : \dd^* v = 0 \big \} = \textbf{V}.
\end{equation}
\end{thm}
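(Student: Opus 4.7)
The plan is to work inside the Hilbert space $V:=\{v\in H^1_0(\mathbb{H}^N(-a^2)):\dd^* v=0\}$, which is a closed subspace of $H^1_0(\mathbb{H}^N(-a^2))$, and perform the $H^1$-orthogonal decomposition $V=\mathbf{V}\oplus \mathbf{V}^{\perp}$ with respect to its closed subspace $\mathbf{V}$. The content of the theorem then reduces to identifying $\mathbf{V}^{\perp}$ explicitly: I aim to show $\mathbf{V}^{\perp}=\mathbb{F}$ when $N=2$, and $\mathbf{V}^{\perp}=\{0\}$ when $N\geq 3$.

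For any $v_2\in \mathbf{V}^{\perp}$, the orthogonality tested against $\phi\in\Lambda^1_{c,\sigma}(\mathbb{H}^N(-a^2))$ reads $(v_2,\phi)_{L^2}+(\nabla v_2,\nabla\phi)_{L^2}=0$. Distributional integration by parts and the Weitzenb\"ock formula \eqref{KEY} (with $\dd^* v_2=0$) turn this into $\langle \dd^*\dd v_2+\lambda v_2,\phi\rangle=0$ where $\lambda:=1+(N-1)a^2>0$, so by Lemma \ref{p2} there exists a $0$-current $Q$ with $\dd^*\dd v_2+\lambda v_2=\dd Q$. Applying $\dd^*$ yields $\Delta Q=0$, making $Q=:F$ a smooth harmonic function by elliptic regularity, while applying $\dd$ yields $\Delta_H(\dd v_2)=-\lambda \dd v_2$. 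The non-negativity of $\Delta_H$ on $L^2$ two-forms (justified via the cutoff pairing of Lemma \ref{VeryGoodLemma} applied to $\dd v_2$) forces $\dd v_2=0$, so $\lambda v_2=\dd F$, i.e., $v_2$ is exact with harmonic potential.

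For $N=2$ the previous step gives $\mathbf{V}^\perp\subseteq\mathbb{F}$; the reverse inclusion I would verify by starting from $\alpha=\dd F\in\mathbb{F}$, noting $\dd^*\alpha=\Delta F=0$, and using the Weitzenb\"ock identity $\nabla^*\nabla \alpha=(N-1)a^2\alpha$ together with the cutoff argument of Lemma \ref{VeryGoodLemma} to upgrade $\alpha$ to $H^1_0$; the orthogonality $\alpha\perp_{H^1}\mathbf{V}$ then follows from $(\alpha,\phi)_{L^2}=\int \dd F\wedge *\phi=0$ for $\phi\in\Lambda^1_{c,\sigma}$ (since $\dd(*\phi)=\pm * \dd^*\phi=0$), combined with the Weitzenb\"ock identity at the gradient level to handle the $(\nabla\alpha,\nabla\phi)_{L^2}$ piece. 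For $N\geq 3$, the same characterization exhibits any $v_2\in\mathbf{V}^\perp$ as an $L^2$-harmonic $1$-form on $\mathbb{H}^N(-a^2)$, and I would invoke the vanishing of such forms on $\mathbb{H}^N$ for $N\neq 2$ (equivalently, the positive spectral bottom of $\Delta_H$ on $L^2$ $1$-forms, or the vanishing of the reduced $L^2$-cohomology $H^1_{(2)}(\mathbb{H}^N)$) to force $v_2=0$, hence $V=\mathbf{V}$.

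The main technical obstacle is the distributional step deducing $\dd v_2=0$ from $\Delta_H(\dd v_2)=-\lambda \dd v_2$: since $\dd^*\dd v_2$ is only a distribution a priori, one cannot integrate by parts freely and must legitimize the pairing using cutoffs $\phi_R$ as in \eqref{Bumpfunction} together with the dominated convergence theorem, following the blueprint of Lemma \ref{VeryGoodLemma}. The second delicate input is the dimension-dependent vanishing of $L^2$-harmonic $1$-forms on hyperbolic space for $N\geq 3$, which is precisely what collapses the decomposition to $V=\mathbf{V}$ in higher dimensions and forces the extra $\mathbb{F}$ summand when $N=2$.
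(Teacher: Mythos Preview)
Your proposal is correct and follows essentially the same route as the paper: orthogonal decomposition of $\{v\in H^1_0:\dd^*v=0\}$, the Weitzenb\"ock identity together with Lemma~\ref{p2} to produce the current equation, applying $\dd$ and a cutoff argument to force $\dd v_2=0$, and then invoking the dimension-dependent (non)triviality of $L^2$ harmonic $1$-forms (the paper cites \cite{Dodziuk}). The only cosmetic differences are that you also apply $\dd^*$ to identify the current $Q$ directly as a harmonic function (the paper instead just observes that $v$ is closed and co-closed and cites \cite{CC13} for $v\in\mathbb{F}$), and you sketch the reverse inclusion $\mathbb{F}\subset\mathbf{V}^\perp$ explicitly where the paper defers to \cite{CC13}.
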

\begin{remark} The main idea of the proof is the same as in 2D.  For completeness, we reproduce the main details.  The first difference in the proof comes towards the middle, where we cannot use the Hodge $\ast$ operator to identify a $2-$form with a function.  Finally, the main difference for $N\geq 3$ is in the end, where by \cite{Dodziuk} we know that $\F\equiv \{0\}$, and hence we obtain  \eqref{decomp2} instead of \eqref{decomp1}.
\end{remark}

\begin{proof}
$\big \{ v \in H^1_0(\mathbb{H}^N(-a^2)) : \dd^* v = 0 \big \}$ is a Hilbert space, when equipped with the following standard inner product
\begin{equation}
[u, v] = \int_{\mathbb{H}^N(-a^2)} g(u, v) \Vol_{\mathbb{H}^N(-a^2)} + \int_{\mathbb{H}^N(-a^2)} g(\nabla u, \nabla v) \Vol_{\mathbb{H}^N(-a^2)} .
\end{equation}
Clearly $\textbf{V}$ is a closed subspace of $\big \{ v \in H^1(\mathbb{H}^N(-a^2)) : \dd^* v = 0 \big \}$.
We have the orthogonal decomposition
\begin{equation}
\big \{ v \in H^1_0(\mathbb{H}^N(-a^2)) : \dd^* v = 0 \big \} = \textbf{V} \oplus \textbf{V}^{\perp} ,
\end{equation}
where $\textbf{V}^{\perp}$ is the orthogonal complement of $\textbf{V}$ in $\big \{ v \in H^1_0(\mathbb{H}^N(-a^2)) : \dd^* v = 0 \big \}$.  We show $\textbf{V}^{\perp} \subset \mathbb{F}$, and because by \cite{Dodziuk}, $\F\equiv\set{0}$ for $N\geq 3$, we get \eqref{decomp2}.  For $N=2$, we get $\textbf{V}^\perp=\F$.

Let $v \in \textbf{V}^{\perp}$.  By definition, $\dd^* v = 0$, and
\begin{equation}\label{ELEVEN}
[v,\theta] = 0,
\end{equation}
for $\theta \in \textbf V$ and in particular, for any test $1$-form $\theta \in \Lambda_{c, \sigma}^1(\mathbb{H}^N(-a^2))$.  Again, by \eqref{KEY} we have
\begin{equation}
\nabla^*\nabla \theta = \dd^* \dd \theta + (N-1)a^2 \theta.
\end{equation}
So by integration by parts
\begin{equation*}
\begin{split}
\int_{\mathbb{H}^N(-a^2)} g (\nabla v, \nabla \theta ) \Vol_{\mathbb{H}^N(-a^2)} & = \int_{\mathbb{H}^N(-a^2)} g (v, \nabla^* \nabla \theta ) \Vol_{\mathbb{H}^N(-a^2)} \\
& = \int_{\mathbb{H}^N(-a^2)} g (v , \dd^* \dd \theta ) + (N-1)a^2 g (v, \theta ) \Vol_{\mathbb{H}^N(-a^2)} .
\end{split}
\end{equation*}
Now, using the language of currents, we can write the last line as $(\dd^\ast \dd v, \theta)+ (N-1)a^2(v,\theta)$, where $(\cdot, \cdot)$ is the scalar product of a current with a form.  So \eqref{ELEVEN} is equivalent to
\[
(\dd^\ast \dd v, \theta)+ ((N-1)a^2+1)(v,\theta)=0,
\]
or
\[
\left(\dd^\ast \dd v +((N-1)a^2+1)v,\theta\right)=0
\]
for all $\theta \in \Lambda_{c, \sigma}^1(\mathbb{H}^N(-a^2))$.
Thus by Lemma \ref{p2}
\begin{equation}\label{Stokesequation}
\dd^* \dd v + \big ( (N-1)a^2 + 1 \big ) v + \dd P = 0,
\end{equation}
for some $0$ degree current $P$.

Now, by applying $\dd$ on both sides of \eqref{Stokesequation}, we obtain the following equation is satisfied by the $2$-form $\omega = \dd v$
\begin{equation}\label{Vorticityequation}
\dd \dd^* \omega + \big ( (N-1)a^2 + 1 \big ) \omega = 0 .
\end{equation}
We note that so far \eqref{Vorticityequation} holds on $\mathbb{H}^N(-a^2)$ in the sense of currents.

Next, observe that the property $\omega \in L^2(\mathbb{H}^N(-a^2))$ follows directly from the fact that $v \in H^1(\mathbb{H}^N(-a^2))$. Observe that \eqref{Vorticityequation} is an elliptic system. So the standard elliptic theory tells us that the 2-form $\omega$, as a solution to \eqref{Vorticityequation}, must be smooth on $\mathbb{H}^N(-a^2)$.  Thus \eqref{Vorticityequation} actually holds in the classical sense.

We now proceed to prove that the $2$-form $\omega$ is identically zero on $\mathbb{H}^N(-a^2)$. To achieve this, we use the cut-off function $\phi_R$ with properties \eqref{Bumpfunction}.  Integrating \eqref{Vorticityequation} against $\omega \phi_R^2$ immediately gives
\begin{equation}\label{CUTOFFINTEGRAL}
\begin{split}
&\int_{\mathbb{H}^N(-a^2)} g (\dd \dd^* \omega , \omega \phi_R^2  ) \Vol_{\mathbb{H}^N(-a^2)}\\
&\qquad + \big ( (N-1)a^2 +1 \big ) \int_{\mathbb{H}^N(-a^2)} g(\omega , \omega) \phi_R^2 \Vol_{\mathbb{H}^N(-a^2)} = 0.
\end{split}
\end{equation}
Integrating by parts in the first term above will produce an expression $$\dd^* \big (\phi_R^2 \omega \big ) = (-1)^{3N+1} * \dd * \big ( \phi_R^2 \omega \big )=(-1)^{3N+1} * \dd  (\phi_R^2 \ast \omega)=(-1)^{3N+1} *( \dd \phi_R^2\wedge \ast \omega)+\phi_R^2\dd^\ast \omega.$$
If we also use that
\begin{equation*}
\int_{\mathbb{H}^N(-a^2)} g (*\alpha , * \beta )  \Vol_{\mathbb{H}^N(-a^2)} = \int_{\mathbb{H}^N(-a^2)} g (\alpha , \beta )  \Vol_{\mathbb{H}^N(-a^2)} ,
\end{equation*}
then the first term in \eqref{CUTOFFINTEGRAL} becomes
\begin{align}
\int_{\mathbb{H}^N(-a^2)} g (\dd \dd^* \omega , \omega \phi_R^2  ) \Vol_{\mathbb{H}^N(-a^2)}&=(-1)^{3N+1}\int_{\mathbb{H}^N(-a^2)} g (\dd^* \omega ,  *( \dd \phi_R^2\wedge \ast \omega)  ) \Vol_{\mathbb{H}^N(-a^2)}\nonumber\\
&\qquad+\int_{\mathbb{H}^N(-a^2)} g (\dd^* \omega , \dd^\ast \omega) \phi_R^2\Vol_{\mathbb{H}^N(-a^2)}\nonumber\\
&=\int_{\mathbb{H}^N(-a^2)} g (\dd\ast \omega ,   \dd \phi_R^2\wedge \ast \omega ) \Vol_{\mathbb{H}^N(-a^2)}\nonumber\\
&\qquad+\int_{\mathbb{H}^N(-a^2)} g (\dd^* \omega , \dd^\ast \omega) \phi_R^2\Vol_{\mathbb{H}^N(-a^2)}.\label{2ndterm}
\end{align}

Everything on the lhs of \eqref{CUTOFFINTEGRAL} has a positive sign except possibly
$\int_{\mathbb{H}^N(-a^2)} g (\dd\ast \omega ,   \dd \phi_R^2\wedge \ast \omega ) \Vol_{\mathbb{H}^N(-a^2)}$, but it can be bounded using Cauchy's inequality with $\eps=\frac 12$ (similarly as in Lemma \ref{VeryGoodLemma})
\begin{align}
\int_{\mathbb{H}^N(-a^2)} g (\dd\ast \omega , &  \dd \phi_R^2\wedge \ast \omega ) \Vol_{\mathbb{H}^N(-a^2)}   \leq  2 \bigg | \int_{\mathbb{H}^N(-a^2)}  g ( \phi_R \cdot \dd * \omega  , \dd \phi_R \wedge * \omega   ) \Vol_{\mathbb{H}^N(-a^2)}     \bigg | \nonumber\\
& \leq \frac 12 \int_{\mathbb{H}^N(-a^2)} \phi_R^2 \abs {\dd^\ast \omega}^2 \Vol_{\mathbb{H}^N(-a^2)}
+ \frac{8}{R^2} \big \| \omega \big \|_{L^2(\mathbb{H}^N(-a^2))}^2.\label{Crucialinequality}
\end{align}
 Using \eqref{Crucialinequality} and \eqref{2ndterm} in \eqref{CUTOFFINTEGRAL}
 gives
\begin{equation}\label{FinalGoodinequality}
\begin{split}
 \big ( (N-1)a^2 + 1 \big )  \int_{\mathbb{H}^N(-a^2)} \abs{\omega}^2 \phi_R^2  \Vol_{\mathbb{H}^N(-a^2)}
+& \frac{1}{2}\int_{\mathbb{H}^N(-a^2)} \phi_R^2 \abs{ \dd^* \omega}^2  \Vol_{\mathbb{H}^N(-a^2)} \\
& \leq \frac{8}{R^2}  \big \| \omega \big \|_{L^2(\mathbb{H}^N(-a^2))}^2 .
\end{split}
\end{equation}
Taking a limit on both sides of \eqref{FinalGoodinequality} as $R\rightarrow \infty$, gives
\begin{equation}
\int_{\mathbb{H}^N(-a^2)} \abs{w}^2\Vol_{\mathbb{H}^N(-a^2)} = 0,
\end{equation}
from which it follows that $\dd v = \omega = 0$ holds on $\mathbb{H}^N(-a^2)$.   We also have $\dd^\ast v=0$ and $v \in L^2$, so $v$ is a harmonic $L^2$ $1$-form.  By \cite{Dodziuk}, if $N\geq 3$, $v=0$.  If $N=2$, $v$ can be nontrivial, and we can show $v\in \F$. It can be also showed $\F\subset \textbf{V}^\perp$ (see \cite[Lemmas 3.2, 3.3, 3.6]{CC13}), so $\F=\textbf{V}^\perp$.
\end{proof}

\section{Proof of Theorem \ref{MAINTHM}}
We are now ready to prove the Liouville theorem in the hyperbolic setting.  To begin, let $N\geq 2$ and consider a smooth divergence free vector field $u$ and smooth function $p$ on $\mathbb{H}^N(-a^2)$, which together satisfy equation \eqref{NSequation} and the finite Dirichlet norm property \eqref{FiniteDirichlet}. Then by Lemma \ref{VeryGoodLemma}, $u \in L^2(\mathbb{H}^N(-a^2))$. This means that $u \in H^1(\mathbb{H}^N(-a^2))$, since now we have
\begin{equation}
\big \|u \big \|_{H^1(\mathbb{H}^N(-a^2))}^2 = \int_{\mathbb{H}^N(-a^2)} \big | u \big |^2 + \big |\nabla u \big |^2  \Vol_{\mathbb{H}^N(-a^2)} < \infty .
\end{equation}
We again consider a bump function $\phi_R \in C_c^{\infty}(\mathbb{H}^N(-a^2))$, which satisfies \eqref{Bumpfunction}. Then  \eqref{H10} holds,
so $u \in H^1_0(\mathbb{H}^N(-a^2))$. And since $\dd^* u = 0$, it follows that $u$ lies in
\begin{equation}
\big \{ v \in H^1_0 (\mathbb{H}^N(-a^2)) : \dd^*v = 0  \big \},
\end{equation}
the function space considered in Section \ref{FunctAnalysis}.
\newline\noindent
\textbf{Case 1: $N\geq 3$.}  By \eqref{decomp2}, there exists a sequence $\{ v_k \}_{k=1}^{\infty}$ in $\Lambda_{c,\sigma}^1 (\mathbb{H}^N(-a^2))$ such that
\begin{equation}\label{lim}
\lim_{k\rightarrow \infty } \big \| v_k - u \big \|_{H^1(\mathbb{H}^N(-a^2))} = 0 .
\end{equation}
Since $\dd^* v_k = 0$, from integration by parts we have
\begin{equation}\label{pressurevanishing}
\int_{\mathbb{H}^3(-a^2)} g (\dd P , v_k ) \Vol_{\mathbb{H}^N(-a^2)} = - \int_{\mathbb{H}^N(-a^2)} \dd^* \big ( P \cdot v_k\big ) \Vol_{\mathbb{H}^N(-a^2)} = 0 .
\end{equation}
By testing equation \eqref{NSequation} against $v_k$, it follows, by taking \eqref{pressurevanishing} into account, that the following relation holds
\begin{equation}\label{plainidentity}
2 \int_{\mathbb{H}^N(-a^2)} g (\Def u , \Def v_k  ) \Vol_{\mathbb{H}^N(-a^2)} + \int_{\mathbb{H}^N(-a^2)} g(\nabla_u u , v_k  ) \Vol_{\mathbb{H}^N(-a^2)} = 0 .
\end{equation}
Next by integration by parts (using $ug(u,v_k-u)\in L^1$), and then \eqref{L4} we have
\begin{equation}\label{nonlinearterm}
\begin{split}
\big|\int_{\mathbb{H}^N(-a^2)} g(\nabla_u u , v_k - u ) \Vol_{\mathbb{H}^N(-a^2)} \big|
&=\big |\int_{\mathbb{H}^N(-a^2)} g( u ,\nabla_u( v_k - u) ) \Vol_{\mathbb{H}^N(-a^2)}\big|
\\
& \leq \big \|  u \big \|^2_{L^4}  \norm{\nabla( v_k - u)}_{L^2} \\
& \lesssim \left\{\begin{array}{ccc}
 &\norm{u}^2_{H^1}\norm{v_k -u}_{H^1} , &N=3, 4,\\
 &  \norm{u}_{H^1} \norm{u}_{L^\infty}  \norm{v_k -u }_{H^1},  &N\geq 5.
\end{array}
\right.
\end{split}
\end{equation}
So by taking the limit in \eqref{nonlinearterm} as $k \rightarrow \infty$, it follows that \begin{equation}\label{finalestimate}
\lim_{k\rightarrow \infty } \bigg | \int_{\mathbb{H}^N(-a^2)} g(\nabla_u u , v_k - u ) \Vol_{\mathbb{H}^N(-a^2)} \bigg | = 0 .
\end{equation}
Now, \eqref{finalestimate} enables us to take the limit on both sides of \eqref{plainidentity} as $k \rightarrow \infty$, and deduce that the following property must hold
\begin{equation}\label{OK}
2 \int_{\mathbb{H}^N(-a^2)} g (\Def u , \Def u ) \Vol_{\mathbb{H}^N(-a^2)} + \int_{\mathbb{H}^N(-a^2)} g(\nabla_u u , u  ) \Vol_{\mathbb{H}^N(-a^2)} = 0 .
\end{equation}
However by \eqref{L3}
\[
\int_{\mathbb{H}^N(-a^2)} \big | u |u|^2 \big | \Vol_{\mathbb{H}^N(-a^2)}  =   \big \| u \big \|_{L^3(\mathbb{H}^N(-a^2))}^3 <\infty.
\]
So because $u|u|^2 \in L^1$, it follows (see for example \cite[(A.27)]{CC13})
\begin{equation}
\int_{\mathbb{H}^N(-a^2)} g(\nabla_u u , u  ) \Vol_{\mathbb{H}^N(-a^2)} = - \frac{1}{2} \int_{\mathbb{H}^N(-a^2)} \dd^* \big ( |u|^2 u \big )  \Vol_{\mathbb{H}^N(-a^2)} = 0 .
\end{equation}
As a result, identity \eqref{OK} now reduces down to the following simple relation
\begin{equation}\label{FinalGlory}
\int_{\mathbb{H}^N(-a^2)} g (\Def u , \Def u ) \Vol_{\mathbb{H}^N(-a^2)} = 0 .
\end{equation}
However, Lemma \ref{GoodLemmaTWO} tells us that $u$ must satisfy the following estimate
\begin{equation}
(N-1)a^2 \big \|u \big \|_{L^2(\mathbb{H}^N(-a^2))}^2 \leq \big \| \Def u \big \|_{L^2(\mathbb{H}^N(-a^2))}^2.
\end{equation}
Hence it follows from \eqref{FinalGlory} that we must have $u =0$ on $\mathbb{H}^N(-a^2)$ for $N\geq 3$ as needed.\\
\textbf{Case 2: $N=2$.}
We employ an entirely different approach here. Take $u \in \Lambda^1(\mathbb{H}^2(-a^2))$ to be a smooth solution to \eqref{NSequation} which satisfies property \eqref{FiniteDirichlet}, and which is $L^{\infty}$-bounded on $\mathbb{H}^2(-a^2)$. The key observation is that by taking the operator $\dd$ on both sides of \eqref{NSequation} (after we use \eqref{def} for $2\Def^\ast \Def$), we can deduce the following vorticity equation
\begin{equation}\label{Vorticityin2D}
(-\triangle ) \omega + 2a^2 \omega + \nabla_u \omega = 0,
\end{equation}
where $\omega \in L^2(\mathbb{H}^2(-a^2))$ is the uniquely determined smooth function for which the relation $\dd u = \omega \Vol_{\mathbb{H}^2(-a^2)}$ holds.
As before, we consider the cut-off function $\phi_R$ with property \eqref{Bumpfunction}. Through testing \eqref{Vorticityin2D} against $\omega \phi_R^2$, we can carry out an integration by parts argument to deduce that
\begin{equation*}
\begin{split}
& \int_{\mathbb{H}^2(-a^2)} \big | \nabla \omega \big |^2 \phi_R^2  + \int_{\mathbb{H}^2(-a^2)} 2a^2 \omega^2 \phi_R^2 \\
\leq & 2 \bigg | \int_{\mathbb{H}^2(-a^2)} \phi_R \omega g(\nabla \omega , \nabla \phi_R )  \bigg |
+ \bigg |  \int_{\mathbb{H}^2(-a^2)} \omega^2 \phi_R g(u , \nabla \phi_R)     \bigg | \\
\leq & \frac{1}{2} \int_{\mathbb{H}^2(-a^2)} \big | \nabla \omega \big |^2 \phi_R^2 + 2 \int_{\mathbb{H}^2(-a^2)} \omega^2 \big | \nabla \phi_R \big |^2 + \frac{2\|u\|_{L^{\infty}}}{R} \int_{\mathbb{H}^2(-a^2)} \omega^2 ,
\end{split}
\end{equation*}
from which we get
\begin{equation}\label{QuiteTrivial}
 \frac{1}{2} \int_{\mathbb{H}^2(-a^2)} \big | \nabla \omega \big |^2 \phi_R^2 + \int_{\mathbb{H}^2(-a^2)} 2a^2 \omega^2 \phi_R^2
\leq  \bigg ( \frac{8}{R^2} +  \frac{2\|u\|_{L^{\infty}}}{R} \bigg )\int_{\mathbb{H}^2(-a^2)} \omega^2.
\end{equation}
By passing to the limit on both sides of \eqref{QuiteTrivial} as $R\rightarrow +\infty$, we deduce that we must have $\|\omega\|_{L^2(-a^2)} = 0$. Hence, it follows that $\dd u = 0$ on $\mathbb{H}^2(-a^2)$. As a result, $u = \dd F$ for some harmonic function $F$ on $\mathbb{H}^2(-a^2)$ as needed.

This concludes the proof of Theorem \ref{MAINTHM}.   We now prove Corollary \ref{cor1}
 \subsection{Proof of Corollary \ref{cor1} }
 The proof is identical except that equation \eqref{NSequation} is replaced either by
 \be\label{e1}
 \nabla^\ast \nabla u +\nabla_uu+\dd p=0
 \ee
 or
 \[
 \dd \dd^\ast u +\dd^\ast \dd u +\nabla_uu+\dd p=0,
 \]
 which simplifies to
  \be\label{e2}
 \dd^\ast \dd u +\nabla_uu+\dd p=0,
 \ee
 since $\dd^\ast u=0$.

 In the case of \eqref{e1}, instead of \eqref{FinalGlory} we obtain $\norm{\nabla u}^2_{L^2(\mathbb H^N(-a^2))}=0$ for $N\geq 3$. If $N=2$, essentially the same argument as in Case 2 of the proof of Theorem \ref{MAINTHM} goes through since $\nabla^\ast\nabla=\dd ^\ast \dd+\dd \dd^\ast -\Ric$.

In the case of \eqref{e2}, we obtain at the end $\norm{\dd u}^2_{L^2(\mathbb H^N(-a^2))}=0$ for $N\geq 3$, which means that $u$ is both closed and co-closed. So $u$ must be a harmonic $L^2$ $1-$form, but since these are trivial on $\mathbb H^N(-a^2)$ for $N\geq 3$, the result follows.  For $N=2$,  the vorticity equation now reads
\begin{equation*}
(-\triangle ) \omega + \nabla_u \omega = 0.
\end{equation*}
This leads to $\|\nabla \omega \|_{L^2(\mathbb{H}^2(-a^2))} = 0$, which tells us that $\omega$ must be a constant on $\mathbb{H}^2(-a^2)$. However, this further implies that $\omega = 0$, since $\omega \in L^2(\mathbb{H}^2(-a^2))$. In this way, we still get $u =\dd F $, with $F$ some harmonic function on $\mathbb{H}^2(-a^2)$.

\section{More general manifolds}\label{generalM}
We proved Theorem \ref{MAINTHM} on $\mathbb H^N(-a^2)$ due to the simplicity of the exposition.   In this section we show how to extend it to more general manifolds.  This can be done if $M$ is a smooth, complete,  $N$-dimensional Riemannian manifold with positive injectivity radius and with
$-(N-1)b^2\leq \Ric \leq -(N-1)a^2<0$.

We first have the following analogs of Lemmas \ref{VeryGoodLemma} and \ref{GoodLemmaTWO}, which we state as corollaries and very briefly sketch out the proofs.

\begin{cor}

Let $N \geq 2$ and let $M$ be a complete, simply connected $N$-dimensional manifold with $\Ric \leq -(N-1)a^2<0$.   Consider a smooth $1$-form $u \in \Lambda^1(M)$ which satisfies  \begin{equation}\label{Louville1M}
\int_{M} \big | \nabla u \big |^2 \Vol_{M} < \infty .
\end{equation}
Then $u$ also satisfies
$\dd u \in L^2(M)$, $u \in L^2(M)$,
$\dd^* u \in L^2(M)$,
and
\begin{equation}\label{aprioriestimate2}
\int_{M} \big | u \big |^2 \Vol_{M} \leq \frac{2}{(N-1)a^2} \int_{M} \big | \nabla u \big |^2 \Vol_{M} .
\end{equation}
\end{cor}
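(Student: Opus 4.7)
The plan is to mimic the proof of Lemma \ref{VeryGoodLemma} essentially line by line on $M$, keeping track of the two places where the explicit hyperbolic structure was used and showing that each can be replaced by either a purely local computation or by the one-sided curvature hypothesis $\Ric \leq -(N-1)a^2$.

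First, the pointwise identities \eqref{goodexpressions} and \eqref{goodexpressions2}, together with the $L^2$ bounds $\|\dd u\|_{L^2} \leq \sqrt{2}\,\|\nabla u\|_{L^2}$ and $\|\dd^* u\|_{L^2} \leq N\|\nabla u\|_{L^2}$ that they yield, are local and therefore transfer to any Riemannian manifold unchanged. For the $L^2$ bound on $u$ itself, I would start from the general Weitzenb\"ock identity $\nabla^{*}\nabla u = \dd\dd^{*} u + \dd^{*}\dd u - \Ric u$ and translate the hypothesis $\Ric \leq -(N-1)a^2$ into the pointwise inequality $-g(\Ric u,u)\geq (N-1)a^2|u|^2$, which now plays the role that the exact eigenvalue equation $\Ric u = -(N-1)a^2 u$ played in the hyperbolic case.

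Next, I would fix a basepoint $O\in M$ and, using completeness, construct cutoffs $\phi_R\in C_c^{\infty}(M)$ satisfying the same properties as in \eqref{Bumpfunction}, namely $\chi_{B_O(R)}\leq \phi_R \leq \chi_{B_O(2R)}$ and $|\nabla \phi_R|\leq 2/R$. The natural candidate is $\phi_R(x) = \eta(d(O,x)/R)$ for a scalar cutoff $\eta$; since $d(O,\cdot)$ is $1$-Lipschitz on a complete manifold and $B_O(2R)$ is precompact, this produces a Lipschitz cutoff with the correct gradient bound, and a smooth version is obtained by Greene--Wu smoothing without harming the bound. Testing the Weitzenb\"ock identity against $\phi_R^2 u$ and integrating by parts as in \eqref{ONE}--\eqref{FOUR} yields the analog of \eqref{FIVE}, in which the only change is that $(N-1)a^2\int \phi_R^2|u|^2$ is replaced by $\int \phi_R^2 \bigl(-g(\Ric u,u)\bigr)$. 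Dropping the non-negative $\int\phi_R^2(|\dd u|^2+|\dd^* u|^2)$ and applying the curvature inequality gives exactly the form of \eqref{FIVEa}.

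From that point on, the Cauchy-$\epsilon$ estimates \eqref{SIX}--\eqref{EIGHT}, the choice $\epsilon = (N-1)a^2/6$, and the passage to the limit $R\to\infty$ via dominated convergence are formal and require no further geometric input, so they deliver \eqref{aprioriestimate2} and then immediately $u, \dd u, \dd^* u \in L^2(M)$. The main obstacle I anticipate is pedantic rather than substantive: producing a smooth cutoff with a gradient bound $O(1/R)$ on a general complete manifold requires a short justification (for example via Greene--Wu smoothing of $d(O,\cdot)$), whereas on $\H$ the distance function is smooth away from $O$ and the bump can be written down explicitly. Once the cutoff is in hand, the proof is a faithful transcription of the one in Lemma \ref{VeryGoodLemma}.
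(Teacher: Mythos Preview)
Your proposal is correct and follows essentially the same route as the paper's own proof: replace the exact Ricci eigenvalue equation by the one-sided inequality $-g(\Ric u,u)\geq (N-1)a^2|u|^2$, so that \eqref{FIVE} becomes an inequality rather than an identity, and then repeat \eqref{FIVEa}--\eqref{NINE} verbatim. You are in fact slightly more careful than the paper about the construction of the cutoff $\phi_R$ on a general complete manifold; the paper simply asserts \eqref{Bumpfunction2} without further comment.
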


\begin{proof}  Here just like in the proof of Lemma we have \eqref{goodexpressions} and \eqref{goodexpressions2}, so again
\begin{equation}
\big \| \dd u \big \|_{L^2(M)}  \leq 2^{\frac{1}{2}} \big \| \nabla u  \big \|_{L^2(M)} ,\quad
\big \| \dd^* u \big \|_{L^2(M)}  \leq  N \big \| \nabla u  \big \|_{L^2(M)} .
\end{equation}
For $L^2$ bound, we again use the Weitzenb\"ock formula
\begin{equation*}
\nabla^* \nabla u = \dd \dd^* u + \dd^* \dd u - \Ric u,
\end{equation*}
and integrate it against $u$ and a bump function $\phi^2_R \in C^{\infty}_c (M)$ where $\phi_R$  satisfies
\begin{equation}\label{Bumpfunction2}
\begin{split}
 \chi_{B_O(R)} \leq \phi_R \leq \chi_{B_O(2R)} , \quad
 \big | \nabla \phi_R \big | = \big | \dd \phi_R  \big | \leq \frac{2}{R} .
\end{split}
\end{equation}
Using $g (\Ric u , u )=\Ric(u,u)\leq -(N-1)a^2 g(u,u)$, this gives
\begin{equation}\label{ONEa}
\begin{split}
 \int_{M} g (\nabla^* \nabla u , \phi_R^2 u  ) &=  \int_{M} g (\dd \dd^* u , \phi_R^2 u )  +
\int_{M} g (\dd^* \dd u , \phi_R^2 u ) -\int_{M} g(\Ric u, \phi_R^2 u)\\
&\geq  \int_{M} g (\dd \dd^* u , \phi_R^2 u )  +
\int_{M} g (\dd^* \dd u , \phi_R^2 u ) +(N-1) \int_{M} a^2\phi_R^2g (u, u).
\end{split}
\end{equation}
Then the equation \eqref{FIVE} becomes
\begin{equation}\label{FIVEm}
\begin{split}
&\int_{M} g ( \nabla u , 2\phi_R\dd\phi_R\otimes u )  +\int_{M}  \phi^2_R \abs{\nabla u}^2    \\
&\quad\geq\int_{M} \phi^2_R( \abs{ \dd^* u }^2 + \big | \dd u \big |^2 )
-\int_{M} 2 \phi_R \cdot \dd^* u \cdot g ( \dd \phi_R , u )     \\
&\qquad +   \int_{M} 2 \phi_R \cdot g ( \dd u , \dd \phi_R \wedge u    )
+(N-1)a^2 \int_{M} \phi_R^2 |u|^2  \end{split}
\end{equation}
Rearranging and estimating just like in \eqref{FIVEa}-\eqref{EIGHT}, we arrive at \eqref{NINE} stated on $M$ instead of $\mathbb H^N(-a^2)$.  The remainder of the proof is then exactly the same.
  \end{proof}

\begin{cor}\label{GoodLemmaTWOa}
Let $N \geq 2$ and let $M$ be a smooth and complete $N$-dimensional manifold with $\Ric \leq -(N-1)a^2<0$.  Consider a smooth $1$-form on $M$, which satisfies
\begin{equation}\label{ConditionM}
\int_{M} \big | \nabla u \big |^2 \Vol_{M} < \infty .
\end{equation}
Then
\begin{equation}\label{EASYM}
2 \big \| \Def u \big \|_{L^2( M)}^2 \geq 2 \big \| \dd^* u \big \|_{L^2( M)}^2 +
\big \| \dd u \big \|_{L^2( M)}^2 + 2(N-1)a^2 \big \| u \big \|_{L^2( M)}^2 .
\end{equation}
\end{cor}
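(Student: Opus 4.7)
The plan is to follow the proof of Lemma \ref{GoodLemmaTWO} almost verbatim; the one change is that the pointwise identity $\Ric\, u = -(N-1)a^2 u$ available on $\mathbb{H}^N(-a^2)$ is replaced by the pointwise inequality $g(\Ric\, u, u) \leq -(N-1)a^2 |u|^2$ coming from the curvature assumption on $M$, which converts the equality \eqref{EASYEASYEASY} into the one-sided bound \eqref{EASYM}.

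First, by the preceding corollary we have $u \in L^2(M)$, hence $u \in H^1(M)$. Since $M$ is complete, cutoff functions $\phi_R \in C^{\infty}_c(M)$ satisfying \eqref{Bumpfunction2} can be obtained by composing a suitable mollification of the distance function from a fixed base point $O \in M$ with a fixed one-dimensional bump. The computation \eqref{H10} then goes through unchanged to give $\phi_R u \to u$ in $H^1(M)$, so $u \in H^1_0(M)$, and consequently there is a sequence $\{w_k\}_{k=1}^{\infty} \subset \Lambda^1_c(M)$ with $w_k \to u$ in $H^1(M)$.

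Next, apply formula \eqref{def} to each $w_k$ to obtain $2 \Def^* \Def w_k = 2 \dd \dd^* w_k + \dd^* \dd w_k - 2 \Ric w_k$. Pairing in $L^2(M)$ with $w_k$ and integrating by parts (legitimate since $w_k$ has compact support) yields
\begin{equation*}
2 \| \Def w_k \|_{L^2(M)}^2 = 2 \| \dd^* w_k \|_{L^2(M)}^2 + \| \dd w_k \|_{L^2(M)}^2 - 2 \int_M g(\Ric w_k, w_k) \Vol_M .
\end{equation*}
The hypothesis $\Ric \leq -(N-1)a^2$ gives $g(\Ric w_k, w_k) \leq -(N-1)a^2 |w_k|^2$ pointwise, so the right-hand side is bounded below by $2 \| \dd^* w_k \|_{L^2(M)}^2 + \| \dd w_k \|_{L^2(M)}^2 + 2(N-1)a^2 \| w_k \|_{L^2(M)}^2$.

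Finally, the pointwise estimate $|\Def w| \leq |\nabla w|$ together with the $H^1 \to L^2$ continuity of $\dd$, $\dd^*$, and $\Def$ (identical to the argument producing \eqref{limitingTWO} and \eqref{limitingTHREE} in the hyperbolic case) lets us pass to $k \to \infty$ and transfer the inequality to $u$, giving \eqref{EASYM}. I do not anticipate a substantive obstacle; the only technical point is that the distance function on a general $M$ is merely Lipschitz, but a routine mollification yields the required $\phi_R$ on any complete Riemannian manifold.
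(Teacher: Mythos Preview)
Your proposal is correct and follows essentially the same route as the paper: approximate $u\in H^1_0(M)$ by compactly supported $w_k$, integrate the identity $2\Def^*\Def w_k = 2\dd\dd^* w_k + \dd^*\dd w_k - 2\Ric w_k$ against $w_k$, invoke $g(\Ric w_k,w_k)\le -(N-1)a^2|w_k|^2$ to turn \eqref{EASYIdentity} into the inequality \eqref{EASYIdentity2}, and pass to the limit exactly as in \eqref{limitingTWO}--\eqref{limitingTHREE}. Your remarks on mollifying the distance function to build $\phi_R$ are a helpful elaboration but do not depart from the paper's argument.
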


\begin{proof}
The proof is identical as Lemma \ref{GoodLemmaTWO} except that \eqref{EASYIdentity} becomes
\begin{equation}\label{EASYIdentity2}
2 \big \| \Def w_k \big \|_{L^2(M)}^2 \geq  2 \big \| \dd^* w_k \big \|_{L^2(M)}^2 + \big \| \dd w_k \big \|_{L^2(M)}^2 + 2(N-1)a^2 \big \| w_k \big \|_{L^2(M)}^2 ,
\end{equation}
which leads to \eqref{EASYM}, which in this case is an inequality instead of the identity as in \eqref{EASYEASYEASY}.
 \end{proof}

The interpolation identities are exactly the same, and this is when we need the positive injectivity radius and a lower bound on $\Ric$ (see \cite{Var, Hebey}).  The statements about currents are also the same.  This leaves the statement of Theorem \ref{decomp}.  If we do not include that the manifold is simply connected, then we need to replace the space $\F$ by the space of harmonic $L^2$ 1-forms.  The steps of the proof would be the same, and the conclusion would depend on the fact whether or not the manifold $M$ admits nontrivial harmonic $L^2$ forms.

 \section*{Acknowledgements}
 The first author was partially supported by a grant from the National Science Council of Taiwan (NSC 101-2115-M-009-016-MY2).  The second author was partially supported by a grant from the Simons Foundation \#246255.

\bibliography{ref}
\bibliographystyle{plain}
\end{document}